\newcommand{\nwc}{\newcommand}
\newcommand{\cliff}{{\rm Cliff}} 
\nwc{\aaa}{\mathcal{F}}
\nwc{\aap}{\mathcal{F}_{P}}
\nwc{\ab}{\mathfrak{a}}
\nwc{\ap}{\mathfrak{a}_{P}}
\nwc{\cb}{\overline{C}}
\nwc{\ccc}{\mathcal{C}}
\nwc{\ch}{\widehat{C}}
\nwc{\cin}{\textbf{(v)}}
\nwc{\cl}{C'}
\nwc{\cp}{\mathcal{C}_{P}}
\nwc{\cpll}{\mathfrak{c}_{P'}}
\nwc{\cq}{\mathfrak{C}_{Q}}
\nwc{\ct}{\widetilde{C}}
\nwc{\dd}{\mathcal{L}}
\nwc{\ddd}{\mathfrak{d}}
\nwc{\ddl}{\mathcal{L}'}
\nwc{\dlp}{\delta_{P}}
\nwc{\doi}{\textbf{(ii)}}
\nwc{\enq}{\$\$}
\nwc{\fff}{\mathcal{F}}
\nwc{\ffp}{\mathcal{F}_{P}}
\nwc{\ffq}{\mathcal{F}_{Q}}
\nwc{\ffl}{\mathcal{F}'}
\nwc{\G}{\mathcal{G}}
\nwc{\gtl}{\tilde{g}}
\nwc{\guu}{g^{1}_{1}}
\nwc{\gud}{g^{1}_{2}}
\nwc{\gut}{g^{1}_{3}}
\nwc{\gon}{{\rm gon}}
\nwc{\howw}{H^0(\omega)} 
\nwc{\how}{H^0(\omega)} 
\nwc{\hua}{h^{1}(C,\aaa )}
\nwc{\kk}{{\rm K}}
\nwc{\llb}{\mathcal{L}}
\nwc{\mm}{\mathfrak{m}}
\nwc{\mmp}{\mathfrak{m}_{P}}
\nwc{\mpd}{\mathfrak{m}_{P}^{2}}
\nwc{\mmq}{\mathfrak{m}_{Q}}
\nwc{\nn}{\mathbb{N}}
\nwc{\ob}{\overline{\mathcal{O}}}
\nwc{\obp}{\overline{\mathcal{O}}_P}
\nwc{\obr}{\mathcal{O}^*}
\nwc{\och}{\mathcal{O}_{\hat{C}}}
\nwc{\ocut}{\oo _{C}\langle 1,t\rangle}
\nwc{\oh}{\hat{\mathcal{O}}}
\nwc{\ohp}{\hat{\mathcal{O}}_{P}}
\nwc{\ol}{\mathcal{O}'}
\nwc{\oma}{\Omega (\mathfrak{a})}
\nwc{\omo}{\Omega (\mathcal{O})}
\nwc{\oo}{\mathcal{O}}
\nwc{\op}{\mathcal{O}_P}
\nwc{\opc}{\mathcal{O}_{P,C}}
\nwc{\oph}{\widehat{\mathcal{O}}_{P}}
\nwc{\opl}{\mathcal{O}_{P}'}
\nwc{\oplc}{\mathcal{O}_{P,C}'}
\nwc{\opll}{\mathcal{O}_{P'}}
\nwc{\opt}{\overline{\mathcal{O}}_{P}}
\nwc{\optt}{{\mathcal{O}}_{\tilde{P}}}
\nwc{\oq}{\mathcal{O}_{Q}}
\nwc{\oqt}{\tilde{\mathcal{O}}_{Q}}
\nwc{\ot}{\tilde{\mathcal{O}}}
\nwc{\overop}{\bar{\oo}_{P}}
\nwc{\ocux}{\oo _{C}\langle 1,x\rangle}
\nwc{\pb}{\overline{P}}
\nwc{\pbb}{P^*}
\nwc{\pgmd}{\mathbb{P}^{g+2}}
\nwc{\pgmu}{\mathbb{P}^{g+1}}
\nwc{\ph}{\hat{P}}
\nwc{\pp}{\mathbb{P}}
\nwc{\prv}{\noindent\textbf{Proof}:}
\nwc{\pt}{\tilde{P}}
\nwc{\ptl}{\tilde{P}}
\nwc{\pum}{\mathbb{P}^{1}}
\nwc{\qh}{\hat{Q}}
\nwc{\qtl}{\tilde{Q}}
\nwc{\qua}{\textbf{(iv)}}
\newcommand{\sss}{{\rm S}}
\nwc{\rh}{\hat{R}}
\nwc{\sei}{\textbf{(vi)}}
\nwc{\sepp}{\beq\ast\ \ast\ \ast\enq}
\nwc{\ssp}{S_{P}}
\nwc{\sys}{\mathcal{L}}
\nwc{\tcp}{t_{C'}(P')}
\nwc{\tcq}{t_{C'}(Q')}
\nwc{\tre}{\textbf{(iii)}}
\nwc{\um}{\textbf{(i)}}
\nwc{\vl}{\mathcal{W}}
\nwc{\vlp}{\mathcal{W}_{P}}
\nwc{\vpb}{v_{\overline{P}}}
\nwc{\vtxp}{\widetilde{V}_{x,P}}
\nwc{\vv}{\mathcal{W}}
\nwc{\vxp}{V_{x,P}}
\nwc{\vzp}{V_{z,P}}
\nwc{\wh}{\hat{\omega}}
\nwc{\whp}{\hat{\omega}_{P}}
\nwc{\woch}{\omega\cdot\mathcal{O}_{\hat{C}}}
\nwc{\woh}{\omega\cdot\hat{\mathcal{O}}}
\nwc{\ww}{\omega}
\nwc{\wwb}{\omega^*}
\nwc{\wwh}{\widehat{\omega}}
\nwc{\wwhp}{\widehat{\omega}_P}
\nwc{\wwp}{\omega _{P}}
\nwc{\wwt}{\widetilde{\omega}}
\nwc{\wwtp}{\widetilde{\omega}_P}
\nwc{\zz}{\mathbb{Z}}
\newtheorem{coro}{Corollary}[section]
\newtheorem{defi}[coro]{Definition}
\newtheorem{lem}[coro]{Lemma}
\newtheorem{thm}[coro]{Theorem}
\newtheorem*{remark}{Theorem}
\begin{document}

\title{Max Noether Theorem for Singular Curves}

\author{Edson Martins Gagliardi}
\address{Departamento de Ciências Exatas, FACSAE, UFVJM/Mucuri, R. Cruzeiro - Jardim Sao Paulo, 39803-371 Teófilo Otoni - MG}
\email{edson.gagliardi@ufvjm.edu.br}

\author{Renato Vidal Martins}
\address{Departamento de Matem\'atica, ICEx, UFMG
Av. Ant\^onio Carlos 6627,
30123-970 Belo Horizonte MG, Brazil}
\email{renato@mat.ufmg.br}

\maketitle

\begin{abstract}
Max Noether's Theorem asserts that if $\ww$ is the dualizing sheaf of a nonsingular nonhyperelliptic projective curve, then the natural morphisms $\text{Sym}^nH^0(\omega)\to H^0(\omega^n)$ are surjective for all $n\geq 1$. The result was extended for Gorenstein curves by many different authors in distinct ways. More recently, it was proved for curves  with projectively normal canonical models, and curves whose non-Gorenstein points are bibranch at worse. Based on those works, we address the combinatorics of the general case and extend the result for any integral curve. 
\end{abstract}

\section*{Introduction}

In late century XIX, Max Noether proved in \cite{N} a celebrated result establishing that every canonical curve is projectively normal. Consequently, one computes the space dimension of quadrics containing the curve to be $(g-2)(g-3)/2$ where $g$ is its genus, which is one of the versions by which his result is now known. The theorem was important for subsequent works by Enriques \cite{En} and Babbage \cite{B}, characterizing canonical curves which are not intersections of quadrics, by Petri \cite{P} for a complete description of the canonical ideal in terms of its equations and by several authors in a similar vein, see for instance \cite{AS,M, S, Sh,SV}.

\medskip
From \cite[pp.~113-117]{ACGH}, one sees that the result is a consequence of the canonical curves being extremal, i.e., they reach the Castelnuovo's bound, and these are projectively normal. So it's actually possible to extend the argument \emph{verbatin} to Gorenstein singular curves, those for which the dualizing sheaf is a bundle, and a canonical morphism well defined.

\medskip
For arbitrary integral curves, the problem is more subtle as the very notion of a canonical curve seems not to be so precise. In \cite{R}, Rosenlicht generalizes this concept as follows: for a curve $C$ with normalization $\cb$, its \emph{canonical model} $C'$ is the image of the morphism $\cb\rightarrow \mathbb{P}^{g-1}$ induced by the global sections of its dualizing sheaf. He proves that $C'\cong C$ if and only if the latter is non-hyperelliptic and Gorenstein, a result reobtained later on by distinct authors in many different ways, see for instance, \cite{C,F,H,MSD,SAk}. 

\medskip
In \cite{KM}, S. L. Kleiman, along with the second named author proved that if $C$ is non-Gorenstein, then $C'$ is projectively normal if and only if it is extremal, and characterized curves which are so, called \emph{nearly Gorenstein}. 
Highly based on this work, in \cite{Mt}, Noether's result is extended to those curves. In the same article, the general problem is addressed purely intrinsically, and the result proved for curves whose non-Gorenstein points are unibranch at worst. Later on, Feital in \cite{A}, and along with Contiero and the second named author in \cite{CFM}, led the study to an arbitrary number of branches and proved Noether's statement in bibranch case.

\medskip
This is the point of departure of this work. In fact, in \cite{Mt}, Noether's assertion was proved up to a combinatorial local result, 
\cite[Lem.~3.2]{Mt} (corresponding to our Lemma \ref{Lem2}), obtained just for unibranch singularities.  In \cite{CFM}, this local result was rephrased and proved in the bibranch case \cite[Lem.~3.2]{CFM} by means of a new combinatorial result for two-dimensional semigroups \cite[Lem.~ 3.1]{CFM}. The strategy here is essentially skip this step and address the combinatorial problem within another approach, which happens to be somewhat simpler and more general. Indeed, it works for an arbitrary number of branches, and for a general singularity, which need not to be non-Gorenstein, as in \cite{CFM,Mt}. So we get the following result, proved in Section \ref{CapAI}, which is Noether's statement in its full generality.

\begin{remark}\label{Teo2}
Let $C$ be a nonhyperelliptic integral and complete curve over an algebraic closed ground field, with dualizing sheaf $\omega$. Then maps
\begin{equation}\label{eq1}
\mbox{Sym}^{n}H^0(C,\omega)\longrightarrow H^0(C,\omega^n)
\end{equation}
are surjective for every $n\geq 1$.
\end{remark}

Afterwards, in Section \ref{S4},  
we complete equivalences already stated in \cite{CFM} which connect Noether's assertion with Rosenlicht's result and Green's conjecture at level zero, and which happen to be a multivalent characterization of non-hyperelliticity. Finally, in Section \ref{Approach}, we discuss another possible via to get Noether's statement linking the problem with a local theory developed by V. Barucci, M. D'Anna and R. Fr\"oberg in \cite{BDF,BDF2,BF}.

\medskip
\noindent{\bf Acknowledgements.} This work is part of the first named authour Ph.D thesis. The second named authour specially thank V. Barucci and G. Zito for some very helpful discussions when this article was yet to begin.

\section{Setup}
\label{prelim}

In this section, we introduce some notions  used throughout this text. As the present work is strongly based on \cite{CFM,KM,Mt}, we  follow the same notation, so that everything can be easily  carried over from those articles to this one.
 
\subsection{Linear Systems and Canonical Models}\label{SVsubsection1}

\medskip
For the remainder, $C$ is an integral curve,  complete over an algebraically closed ground field $k$, of arithmetic genus $g$, with structure sheaf $\oo_C$, or simply $\oo$. 

\medskip
A \emph{linear system of dimension $r$ and degree $d$ on $C$} is a formal pair $
 \sys =\sys(\fff ,V)$, where $\fff$ is a torsion free sheaf of rank $1$ on $C$, $V$ is a vector subspace of 
 $H^{0}(\fff )$ with dimension $r+1$ and $d:=\deg \fff :=\chi (\fff )-\chi (\oo)$, where $\chi$ is the Euler characteristic. The notation $g_{d}^{r}$ stands for a linear system of degree $d$ and dimension $r$. The linear system is said to be \emph{complete} if $V=H^0(\fff)$, in this case one simply writes $\sys=|\fff|$.
 When $\fff$ is a line bundle, then $\sys$ determines a morphism $C\to\mathbb{P}^d$. Otherwise, $\sys$ defines a pencil with  \emph{non-removable} base points, in the sense of \cite{Cp,RSt}. 

\medskip
A curve $C$ is said to be \emph{hyperelliptic} if there is a morphism $C\rightarrow\pp^1$ of degree $2$ or, equivalently, if it carries a base point free $g_2^1$. Now let $\ww$ be dualizing sheaf of $C$, recall that a point $P\in C$ is \emph{Gorenstein} if the stalk $\ww_P$ is a free $\oo_P$-module. The curve $C$ is said to be \emph{Gorenstein} if all of its points are so, or equivalently, if $\ww$ is invertible. When so, the series $|\ww|$ induces a morphism $\kappa:C\to\mathbb{P}^{g-1}$. If, additionally, $C$ is nonhyperelliptic then $\kappa(C)$ is often referred as a \emph{canonical curve}. 

\medskip
The latter notion can be extended as we will see right away. Before that, we just recall some helpful notation. Given a sheaf $\mathscr{G}$ on $C$, if $\varphi :X\to C$ is a morphism from a scheme $X$ to $C$, one sets
$$
\oo_{X}\mathscr{G}:=\varphi^* \mathscr{G}/\rm{Torsion}(\varphi^*\mathscr{G})
$$
and for each coherent sheaf $\fff$ on $C$ one sets 
$$
\fff^n:=\rm Sym^n\fff/\rm Torsion (\rm {Sym}^n\fff).
$$
In particular, 
if $\fff$ is invertible then clearly $\fff^{n}=\fff^{\otimes n}$.

\medskip
Now let $\cb$ be the normalization of $C$ and $\pi :\cb\rightarrow C$ the natural map. The linear
system $\sys(\oo_{\cb}\ww,H^0(\ww))$ induces a morphism $\psi :\cb\rightarrow{\mathbb{P}}^{g-1}$. Its image $C':=\psi(\cb)$ is the \emph{canonical model} of $C$, introduced by Rosenlicht. He proved in \cite[Thm.\,17]{R} that if $C$ is nonhyperelliptic, then there is a birational morphism 
$$
\pi' : C'\rightarrow C
$$
of which existence we will refer in Section \ref{S4} as \emph{Rosenlicht's statement}. In \cite{KM} one finds another characterization of $C'$ as follows. Let $\widehat{C}:=\text{Proj}(\oplus\,\ww ^n)$ the \emph{canonical blowup} of $C$, that is, the blowup of $C$ along its dualizing sheaf $\ww$. Rosenlicht's result easily implies the existence of a birational morphism $\widehat{C} \to C'$, but as $\ww$ is generated by global sections, then we actually have $C' \cong \widehat{C}$. 

\medskip
Now we study the maps above from a local perspective. First, let  $\widehat{\oo}$, $\oo'$ and $\overline{\oo}$ be the direct images on $C$ of the structure sheaves of $\cb$, $\widehat{C}$ and $ C'$ respectively, $\ccc:=\mathcal{H}\rm {om}(\overline{\oo},\oo)$
the conductor of $\overline{\oo}$ into $\oo$, and set $\overline{\oo}\ww:=\overline{\pi}_*(\oo_{\overline{C}}\ww)$. Given $P\in C$, take $\lambda\in H^0(\ww)$ such that $(\overline{\oo}\ww)_P=\overline{\oo}_P\lambda$ as in \cite[Lem.~2.8]{KM}. 
Let
$$
\vv := \ww/\lambda
$$
be the embedding of $\ww$ on the constant sheaf of rational functions of $C$ by means of $\lambda$.
By \cite[Lem.~6.1]{KM} we have the following sequence of inclusions
\begin{equation}\label{equooo}
\ccc_P\subset
\mathcal{O}_P \subset \vv_p\subset\oph=\op'\subset\obp.
\end{equation}

\subsection{Semigroups}\label{SVsubsection2}
Let $k(C)$ be the field of rational functions of $C$. Given $P\in C$, let $\pb_1,\ldots,\pb_s$ be the points of $\overline{C}$ over $P$, and $v_{\pb_1},\ldots, v_{\pb_s}$ the corresponding valuations. For any $x\in k(C)^*$, set
$$
\upsilon_{P}(x)=\upsilon(x):=(v_{\pb_{1}}(x),\cdots ,v_{\pb_{s}}(x))\in{\mathbb{Z}}^{s}.
$$
The \emph{semigroup of values} of $P$ is
$$
\sss := \upsilon(\op).
$$
Plainly, it is a subsemigroup of $\mathbb{N}^s$. Now for an arbitrary $a\in \mathbb{Z}^s$, let $a_i$ be its $i$-th component. 
We have that $\sss$ has the following additional properties
\begin{itemize}
\item[(i)] if $a, b \in \rm S$ then $ \min (a,b):=(\min(a_1,b_1),\ldots,\min(a_s,b_s)) \in \rm S$;

\item[(ii)] if $a, b \in \rm S$ and $a_i=b_i$ then there exists $\varepsilon \in \rm S$ such that
$\varepsilon_i > a_i=b_i$ and $\varepsilon_j \geq {\rm min}(a_j,b_j)$ (and equality holds if $a_j\neq b_j$);

\item[(iii)] there is $\beta\in\mathbb{N}^s$ such that $\beta+\mathbb{N}^s\subseteq \sss$.
\end{itemize}

\medskip
Properties (i)-(iii) make $\sss$ a \emph{good semigroup} in the sense of \cite{BDF}. It is also a \emph{local semigroup} according to the same reference. As $0$ is the only element in $\sss$ with a vanishing component, the term makes sense. Indeed, one may easily extend the above notion of semigroup of values to a semilocal ring $R$ with $k \subset R \subset k(C)$ just considering the $\pb_i$'s as the points of $\overline{C}$ such that $R\subset \oo_{\pb_i}$, $\forall i$. Then $R$ is local if and only if $v(R)$ is so.  

\medskip
For the remainder, we set  
$$
\alpha:={\min}({\rm S}\setminus\{ 0\})
$$
which agrees with the mutiplicity of $P$, that is $\alpha=\dim(\obp/\mmp\obp)$. On the other hand, the smallest element in $\mathbb{N}^s$ satisfying (iii) is the \emph{conductor} of $\sss$, denoted by $\beta$ as well. The relation with the conductor ideal is clear as 
$$
\beta={\min} (v(\cp))
$$ 
and the \emph{Frobenius vector} of $\rm S$ is $\gamma :=\beta -(1,\ldots ,1)$.

\medskip
A \emph{relative ideal} is any ${\rm E}\subset\sss$ such that $\sss+{\rm E}\subset {\rm E}$ and (iii) holds for ${\rm E}$. If, moreover (i) and (ii) hold as well, then ${\rm E}$ is said a \emph{good} relative ideal. The point here is that for any $\op$-fractional ideal, its set of values is a good relative ideal of $\sss$. 

Now given $E$, one sets
$$\Delta^{{\rm E}}(a):=\{b\in {\rm E}\mid b_i=a_i\hspace{0.4em}\mbox{for some } i\hspace{0.4em} \mbox{ and }\hspace{0.4em} b_j>a_j \mbox{ if } j\neq i\}.$$
The \textit{canonical ideal} of $\sss$ is
\begin{equation}\label{eqK}
\kk:=\{ a\in{\mathbb{Z}} ^{s}\ |\ \Delta ^{\rm S}(\gamma -a)=\emptyset\}.
\end{equation}
It is a good relative ideal of $\sss$ and plays a crucial role here. The term makes sense as well as we will see later on (cf. Equation \ref{eqk2}).

Finally, we fix a notation used throughout this text. Given $a:=(a_1,\ldots,a_s)\in{\mathbb{Z}} ^{s}$ we denote
$$|a|:=a_1+\ldots+a_s.$$
Now take local parameters $t_i$ for the $\oo_{\pb_i}$ and set
\begin{equation}\label{eqq1}
 t^a:=t_1^{a_1}\ldots t_s^{a_s}.   
\end{equation}
For instance, if so, one may write the conductor ideal as $\mathcal{C}_P=t^{\beta}\overline{\mathcal{O}}_P$.

\section{Proof of the Theorem}\label{CapAI}

In this section we prove Max Noether's statement for an arbitrary integral curve, that is, we get the result stated in the Introduction. As we said above,  \cite{KM}, \cite{Mt} and \cite{CFM} lead to partial generalizations of Noether's property. The main tool was the study of how global sections of the dualizing sheaf interact with each singularity, which gets harder as the number of branches grows.  In this sense, \cite{Mt} solves the problem extrinsically and when singularities are unibranch, while \cite{CFM} extends the analysis to the bibranch case. Thus all the global (and even part of the local) analysis are already contained in those works. 

\medskip
So the work here will be essentially a task of extracting a small part of \cite{CFM} and grafting in a new one. We recall all results needed from the prior papers, and develop the proof within the following steps.

\medskip
\noindent {\bf Step 1: Global Part.} First off, follow the whole proof of \cite[Thm. 3.7]{Mt}, which is Noether's statement allowing unibranch non-Gorenstein singularities at worst. It makes use of global arguments, most of them owing to Riemann-Roch, where the number of branches has no special role. But we have two things to adjust: (a) the powers $t_P^{\epsilon}$, with $\epsilon\in\zz$ make no sense if $P$ is not unibranch; (b) the proof relies in four other results, namely \cite[Lems. 3.2 to 3.6]{Mt}.

\medskip
To solve (a), first, one just have to allow $\epsilon$ be a vector in $\zz^s$ and make $t_P^{\epsilon}$ be as in (\ref{eqq1}); also, \cite[Lem.~3.2]{Mt}, which refers to this notation will be rephrased right below in Lemma \ref{Lem2} (as well as it was done in \cite[Lemma 3.2]{CFM}).

To start solving (b), first note that \cite[Lems. 3.3 and 3.4]{Mt} hold no matter the number of branches is. On the other hand, \cite[Lem.~3.6]{Mt} could perfectly had been stated for a multibranch $P\in C$, as long as it is at least triple. Indeed, in the proof, the point $\pb\in\cb$ over $P$ could've been replaced by any two points lying over $P$, in case the latter is not unibranch; and for the rest, one deals with $|a|$, rather than $a\in\zz^s$, noticing that the module is an additive function. 

To finish (b), note that the unibranch assumption in \cite[Lem. 3.5]{Mt} is needed only at the last paragraph. So choose any $\pb\in\cb$ over $P$ and force the function $a$ appearing at the end of the proof of Lemma \cite[Lem. 3.3]{Mt} to have a zero on it. One may find the desired differentials, replacing $v_P(y_i)$ by $|v_P(y_i)|$. Now \cite[Lem. 3.2.(iii)]{Mt} -- which will be addressed right away -- takes care of the rest of the proof.

\medskip
\noindent {\bf Step 2: Local Part.} As said above, our remaining (and main) task is to adjust (and prove) \cite[Lem.~3.2]{Mt}, which we rephrase here as the following result. The statement is essentially \cite[Lem.~3.2]{CFM} up to the fact that we make no assumption that the point is bibranch, at worse. The reason why, is that we will skip \cite[Lem.~3.1]{CFM} by means of a simpler and more general argument.

\begin{lem}\label{Lem2}
Let $P\in C$ be a singular point, then the maps
$$\begin{array}{lrr}
H^0(\vv)^n & \longrightarrow & \dfrac{\vv_P^n}{t_{P}^{-\epsilon}\mathcal{C}^n_P}
\end{array} $$
are surjective for every $n\geq 2$ in the following cases:
\begin{enumerate}[(i)]
\item $|\epsilon| =2n-1$ and $\epsilon \leq (n-1)\alpha$;
\item $|\epsilon|=1$ if there is $f_0\in H^0(\vv)$ with $\upsilon_P(f_0)=\beta$;
\item $ |\epsilon|=0$ if there are $f_0, f_1 \in H^0(\vv)$ with $\upsilon_P(f_0)=\beta$ and $\upsilon_P(f_1)=\beta+u$ with $|u|=1$ or $2$.
\end{enumerate}
\end{lem}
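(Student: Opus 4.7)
I would begin by recasting the surjectivity statement in valuation-theoretic terms. Since $\ccc_P = t^{\beta}\obp$, one has $t_P^{-\epsilon}\ccc_P^n = t^{n\beta-\epsilon}\obp$, so the target $\vv_P^n/t_P^{-\epsilon}\ccc_P^n$ is a finite-dimensional $k$-vector space filtered by valuation at $P$, with one-dimensional graded pieces indexed by the values $a\in v(\vv_P^n)$ satisfying $a\not\geq n\beta-\epsilon$. A descending induction on the valuation then reduces surjectivity to the following claim: for each such $a$, there exist $g_1,\ldots,g_n\in H^0(\vv)$ with $\upsilon_P(g_1\cdots g_n)=a$.

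\textbf{Key steps.} The second step is to assemble a catalogue of global sections realizing prescribed valuations at $P$. By the analysis already contained in \cite{Mt,KM}, any value $v\in v(\vv_P)$ with $v\not\geq \beta$ is the valuation of some element of $H^0(\omega)$ — a fact that uses only Riemann--Roch on suitable twisted subsheaves of $\omega$ and is insensitive to the branch count. In case (ii), the hypothesis on $f_0$ adds the value $\beta$ to this list; in case (iii), $f_1$ adds $\beta+u$. With this inventory in hand, each of the three cases turns into a combinatorial decomposition problem: exhibit $a = v_1+\cdots+v_n$ with each $v_j$ globally realized. I would treat case (i) by induction on $n$, peeling off one summand $v_1$ of small valuation (typically $v_1$ near $\alpha$) automatically realized by a global section, and reducing to the triple $(n-1,\,\epsilon',\,a-v_1)$ with $\epsilon' = \epsilon-(\beta-v_1)$; the bound $\epsilon\leq(n-1)\alpha$ is exactly what keeps the inductive hypothesis alive through this reduction. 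Cases (ii) and (iii) are lighter: here one uses $f_0$ and, if needed, $f_1$ to absorb the conductor contribution, leaving a remainder $a-\beta$ (resp.\ $a-\beta-u$) that decomposes freely in the range $v\not\geq\beta$.

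\textbf{Expected obstacle.} The main technical hurdle is the decomposition in case (i) when the number of branches $s$ is arbitrary. In \cite[Lem.~3.2]{CFM} the bibranch version relied on the dedicated semigroup result \cite[Lem.~3.1]{CFM}, which does not readily generalize beyond $s=2$. My plan is to bypass it by working directly with the good-semigroup axioms recalled in Section~\ref{SVsubsection2}: axiom (i) (closure under componentwise minimum) is used to trim a candidate $v_1$ so that $a-v_1$ remains in $v(\vv_P^{n-1})$; axiom (ii) furnishes a compensating element when $v_1$ and $a-v_1$ share a coordinate; and axiom (iii) controls the conductor tail. Packaging these into a single inductive step uniform in $s$ — rather than by case analysis on the number of branches — is the delicate point, and is what I read the authors' phrase \emph{simpler and more general} to be pointing at.
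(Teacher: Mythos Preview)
Your recasting has a real gap in the multibranch case. When $s>1$ the valuation lands in $\mathbb{Z}^s$ with only a partial order, so the quotient $\vv_P^n/t_P^{-\epsilon}\ccc_P^n$ is \emph{not} filtered with ``one-dimensional graded pieces indexed by the values $a$ with $a\not\geq n\beta-\epsilon$''; there are infinitely many such $a$ while the quotient is finite-dimensional, so ``for each such $a$'' is ill-posed. What one actually needs is a \emph{totally ordered chain} $a_1<a_2<\cdots<a_N$ (componentwise) of length $N$ equal to the dimension; elements with those values are then linearly independent and hence span. Relatedly, your claim that every $v\in v(\vv_P)$ with $v\not\geq\beta$ is hit by a global section follows from $\vv_P=H^0(\vv)+\ccc_P$ only when $v<\beta$ in \emph{every} component; this is precisely the set $\kk^\circ$ the paper works with, and the distinction matters once $s>1$.

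The paper's route is also structurally different from yours. It does \emph{not} induct on $n$ by peeling off a factor (your reduction $\epsilon'=\epsilon-(\beta-v_1)$ does not preserve the constraint $|\epsilon'|=2(n-1)-1$ for a summand ``near $\alpha$''). Instead it proves case~(i) for $n=2$ directly and then lifts to all $n$ and cases (ii)--(iii) by the branch-independent arguments already in \cite[p.~126ff]{CFM}. For $n=2$ it splits the target via $\vv_P^2\supset\ccc_P\supset t^{-\alpha}\ccc_P^2\supset t^{-\epsilon}\ccc_P^2$; the outer pieces come from $\vv_P=H^0(\vv)+\ccc_P$ and from \cite{CFM}, and the core middle piece $\ccc_P/t^{\beta-\alpha}\ccc_P$ reduces via $v(\vv_P)=\kk$ to building a chain in $\kk^\circ+\kk^\circ$ of length $|\beta-\alpha|$ between $\beta$ and $2\beta-\alpha$. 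The new combinatorial input (Lemma~\ref{maismaroto}) handles only the initial segment $[\beta,\beta+m\alpha]$ and is proved by an explicit construction---using the sets $A_l=\{\beta-l\alpha+b:0\leq b<\alpha,\ |\alpha-b|\geq 2\}\subset\kk^\circ$ together with a single element $d\in\sss$ chosen in the first nonempty region $R_n$---rather than by invoking the good-semigroup axioms (i)--(iii) as you propose.
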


To get the result, we will follow the proof of \cite[Lem.~3.2]{CFM}. Note that the strategy is first proving the result in case (i) and $n=2$ and then easily get it for higher $n$ and all cases. By its turn, even this task is subdivided. Indeed, consider the following sequence of three inclusions
\begin{equation}\label{eqs1}
    \vv_P^2\supset\cp \supset t^{\beta-\alpha}_P\cp= t^{-\alpha}_P\cp^2\supset t_P^{-\epsilon}\cp^2.
\end{equation}
each of which corresponds to a part of the proof.

First, recall the following equation which can be found in \cite[p. 117 bot]{St}, reread in \cite[Lem.\,6.1]{KM}
\begin{equation}
\label{equwhc}
\vv_P=H^0(\vv)+\cp.
\end{equation}
This yields 
$\vv_P^n= H^0(\vv)^n+\cp$
and hence epimorphisms 
$
H^0(\vv)^n\twoheadrightarrow\vv_P^n/\cp.
$
In particular, we have the surjective linear morphism
\begin{equation}
\label{equst1}
H^0(\vv)^2\twoheadrightarrow\vv_P^2/\cp.
\end{equation}

So we have to deal with the other two inclusions of (\ref{eqs1}). The main task is proving that
\begin{equation}
\label{equcbn}
\cp /t^{\beta-\alpha}\cp=\overline{H^0(\vv)^2\cap\cp}.
\end{equation}
where the right hand side denotes the set of classes of elements of $H^0(\vv)^2\cap\cp$ mod $t^{\beta-\alpha}\cp$. This is the core of the whole argument, the proof of which we will do right away (in next step). For now, we just observe that the remaining tasks, that is, the proof of 
\begin{equation}
\label{equcbn2}
t^{-\alpha}\cp^2 /t^{-\epsilon}\cp^2=\overline{H^0(\vv)^2\cap t^{-\alpha}\cp^2}.
\end{equation}
and how one gets the lemma for any $n$ and all itens (i)-(iii) out of the case $n=2$, is something the reader can follow verbatim \cite{CFM}, from page 126 on.

\medskip
\noindent {\bf Step 3: Combinatorial Part.} From what was said above, we just have to prove (\ref{equcbn}), and we will be done. 
We start by rephrase it into a pure combinatorial statement. The key fact allowing us to do so is that, by \cite[Prp. 2.14.(iv)]{BDF}, we have
\begin{equation}\label{eqk2}
    v_P(\vv_P)=\kk
\end{equation}
the canonical ideal defined in (\ref{eqK}). So we claim that (\ref{equcbn}) reduces to the existence of a sequence
\begin{equation}\label{eqc1}
a_1=\beta<a_2<a_3<\ldots< a_{|\beta-\alpha|}<2\beta-\alpha
\end{equation}
such that
$a_i\in \kk^{\circ}+\kk^{\circ}$, where ${\kk}^{\circ}:=\{a\in {\kk}\, |\, a<\beta\}$. Indeed, this implies the existence of elements $f_i\in H^0(\vv)^2$ such that $v_P(f_i)=a_i$, owing to (\ref{equcbn}), which are in $\cp$, since $v_P(f_i)\geq \beta$, and all linear independent mod $t^{\beta-\alpha}\cp$, because the sequence is strictly increasing. Moreover, the amount of the $f_i$ is $|\beta-\alpha|=\dim(\cp /t^{\beta-\alpha}\cp)$ and the claim follows.

In order to find the desired sequence (\ref{eqc1}), we first fix some useful notation:
\begin{align*}
\alpha^{i} &:={\mathrm min}(i\alpha, \beta); \\
m &:= \text{largest integer such that }(m+1)\alpha\leq\beta;\  \\
& \,= \text{largest integer such that }\ \alpha^{m+1}=(m+1)\alpha; \\
r&:=\text{smallest integer such that}\ (r+2)\alpha>\beta
\end{align*}
and let also $\{e_1,\ldots,e_s\}$ be the standard basis for $\mathbb{N}^s$. Now split (\ref{eqc1}) into three parts: (I) from $\beta$ to $\beta+m\alpha$; (II) from there to $\beta-\alpha+\alpha^{r+1}$; (III) and then to $2\beta-\alpha$, as in the figure below. 

\begin{figure}[!htb]
\centering
\includegraphics[width=10cm]{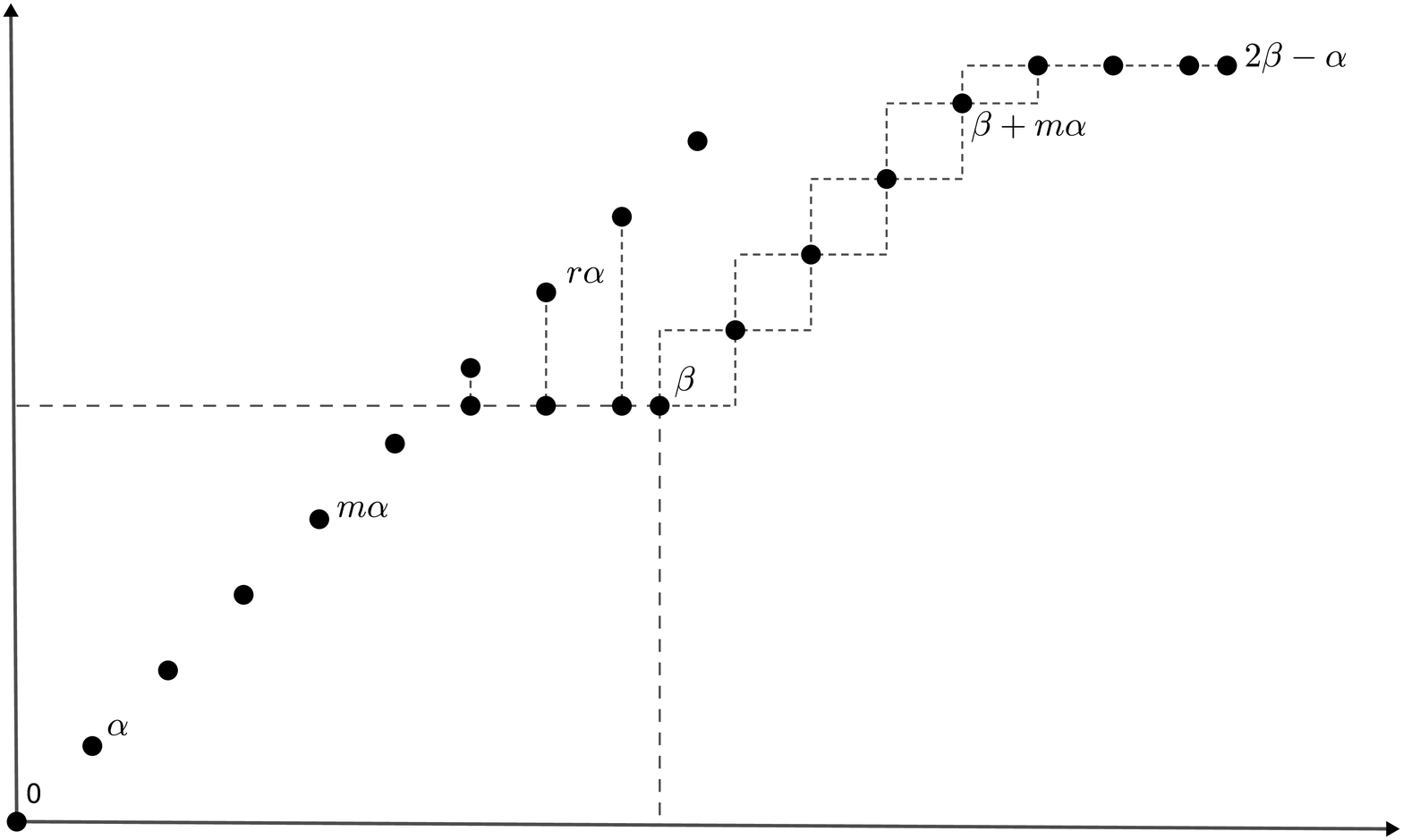}
\end{figure}

\pagebreak

And now note that the argument used in \cite[p. 125]{CFM} to build the sequences for parts (II) and (III) above, do not depend on the number of branches. So we are reduced to prove the result below to finally get Max Noether. As we said, it stands for a shortcut, simpler and more general, to \cite[Lem.~3.1]{CFM}.

\medskip

\begin{lem}\label{maismaroto} There exists a sequence
\begin{equation}
\label{seq1}
\beta=c_0<c_1<\ldots<c_i<\ldots<c_{m|\alpha|-1} < \beta+m\alpha
\end{equation}
such that $c_i \in \kk^{\circ}+\kk^{\circ}$ for every $i\in\{0,\ldots,m|\alpha|-1\}$.

\end{lem}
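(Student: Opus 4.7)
My plan is to construct the sequence by combining two explicit families of elements of $\kk^\circ$. The first family consists of the multiples $i\alpha$ for $0 \leq i \leq m$; each such element lies in $\sss \subseteq \kk$, and by the very definition of $m$ we have $i\alpha \leq m\alpha < \beta$, so all of them sit in $\kk^\circ$. The second family consists of the shifts $\beta-\alpha+u$ for every $u \in \mathbb{N}^s$ with $u_j < \alpha_j$ in each coordinate. Membership of the second family in $\kk^\circ$ follows by a direct $\Delta^{\sss}$-calculation: one has $\gamma-(\beta-\alpha+u) = \alpha-u-(1,\ldots,1)$, and $\Delta^{\sss}(\alpha-u-(1,\ldots,1)) = \emptyset$ because any $b \in \sss$ realising the required coordinate equality would be forced to have some coordinate strictly smaller than $\alpha_j$ and hence be zero by the local minimality of $\alpha$, which then contradicts the strict inequalities imposed on the remaining coordinates of $\Delta^{\sss}$.

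Adding one element from each family produces
\[
c_{k,u} \;:=\; (k+1)\alpha \;+\; (\beta-\alpha+u) \;=\; \beta + k\alpha + u \;\in\; \kk^\circ + \kk^\circ
\]
for $k \in \{0,1,\ldots,m-1\}$ and $u$ as above. These elements lie inside the target range $[\beta,\beta+m\alpha)$, and within each ``slab'' $[\beta+k\alpha,\beta+(k+1)\alpha)$ they provide $\prod_j \alpha_j$ distinct vectors as $u$ ranges over its box. Once enough such elements are in hand per slab, one extracts the required chain by applying any linear extension of the partial order on $\mathbb{N}^s$ (say lexicographic) and concatenating the selections from each slab in order of increasing $k$, which yields a strictly increasing sequence starting at $c_0=\beta$ and ending strictly below $\beta+m\alpha$.

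The main obstacle is producing exactly $|\alpha|$ elements per slab, which is the count needed for the prescribed total $m|\alpha|$. For most $\alpha$ one has $\prod_j \alpha_j \geq |\alpha|$, so choosing a sub-family of the desired size from the construction above is immediate. In the degenerate cases where this count falls short --- typically when several components $\alpha_j$ equal $1$, shrinking the box $[0,\alpha)$ below $|\alpha|$ --- one must manufacture further elements of $\kk^\circ + \kk^\circ$ by perturbing the two summands against each other, for instance replacing the pair $\bigl((k+1)\alpha,\,\beta-\alpha+u\bigr)$ by $\bigl((k+1)\alpha+e_l,\,\beta-\alpha+u-e_l\bigr)$ and checking via the same $\Delta^{\sss}$-argument that both perturbed summands remain in $\kk^\circ$. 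Carrying out this refinement uniformly and independently of the number of branches is precisely the ``simpler and more general'' shortcut advertised in place of the bi-branch combinatorics of \cite[Lem.~3.1]{CFM}.
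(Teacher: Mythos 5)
Your construction only reproduces the easy half of the argument and breaks down exactly where the real work lies. First, the sequence (\ref{seq1}) must be a \emph{chain for the componentwise partial order} on $\mathbb{Z}^s$ (this is what makes the corresponding elements of $H^0(\vv)^2$ independent modulo $t^{\beta-\alpha}\cp$ and what matches the count $m|\alpha|=\dim(\cp/t^{\beta-\alpha+\beta}\obp)$ restricted to this range); passing to a linear extension such as the lexicographic order merely lists the incomparable vectors $\beta+k\alpha+u$, it does not make them comparable, so no chain of length $|\alpha|$ per slab can be ``extracted'' from your box. Indeed any saturated chain from $\beta+k\alpha$ to $\beta+(k+1)\alpha$ must pass through a point $\beta+(k+1)\alpha-e_j$, and for $s\geq 2$ this point is \emph{not} of the form $i\alpha+(\beta-\alpha+u)$ with $u$ in the box $\{u\,:\,0\leq u_j<\alpha_j\ \forall j\}$; for $s=1$ it is, but then the second summand is $\beta-1=\gamma$, which does not belong to $\kk$ when $s=1$ (take $b=0$ in $\Delta^{\sss}(\gamma-\gamma)=\Delta^{\sss}(0)$). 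So in every case your two families miss the top element of each slab, i.e.\ the points the paper calls $c_{i|\alpha|-1}=\beta+i\alpha-e_j$. The obstacle is therefore not the cardinality comparison $\prod_j\alpha_j$ versus $|\alpha|$ that you identify, but the chain structure and the membership of these boundary points.

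Second, the proposed repair by perturbation does not work: $(k+1)\alpha+e_l$ is in general neither in $\sss$ nor in $\kk$. A concrete failure: take $s=1$ and $\sss=\langle 3,5\rangle$, so $\alpha=3$, $\beta=8$, $\gamma=7$, $m=1$, and the required element is $10=\beta+\alpha-1$. Your recipe replaces the (invalid) pair $(3,7)$ by $(4,6)$, but $\gamma-4=3\in\sss$, so $4\notin\kk$; the valid decomposition $10=5+5$ is not produced by any such one--step perturbation. Producing the points $\beta+i\alpha-e_j$ is precisely the content of the second half of the paper's proof, which introduces the minimal $n$ with $R_n\cap\sss\neq\emptyset$, picks $d\in R_n\cap\sss$, checks $d+k\alpha\in\kk^{\circ}$, and writes $\beta+i\alpha-e_j$ as the sum of $d+k\alpha$ with an element of $A_l\subset\kk^{\circ}$ for suitable $l,k$ with $i=n-l+k$. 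Nothing in your proposal supplies, or can be adjusted to supply, this step, so the proof is incomplete at its essential point.
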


\begin{proof}
For every $i\in \nn^*$ set
$$
R_i:=\{a\in \mathbb{N}^s\mid (i-1)\alpha<a<i\alpha\}
$$
and let $n$ be the smallest integer such that $R_n\cap\sss\neq\emptyset$.

\medskip
By the definition of ${\mathrm K}$ it is immediate to note the following inclusions
\begin{equation}
\label{equaaa}
A_l:=\{\beta-l\alpha+b ~ | ~ 0 \leq b <\alpha\ {\mathrm{with}}\ |\alpha - b|\geq 2\}\subset{\mathrm K}^{\circ}
\end{equation}
for every $1\leq l\leq n-1$.

\begin{figure}[h!]
	
	\centering
	\includegraphics[width=7cm]{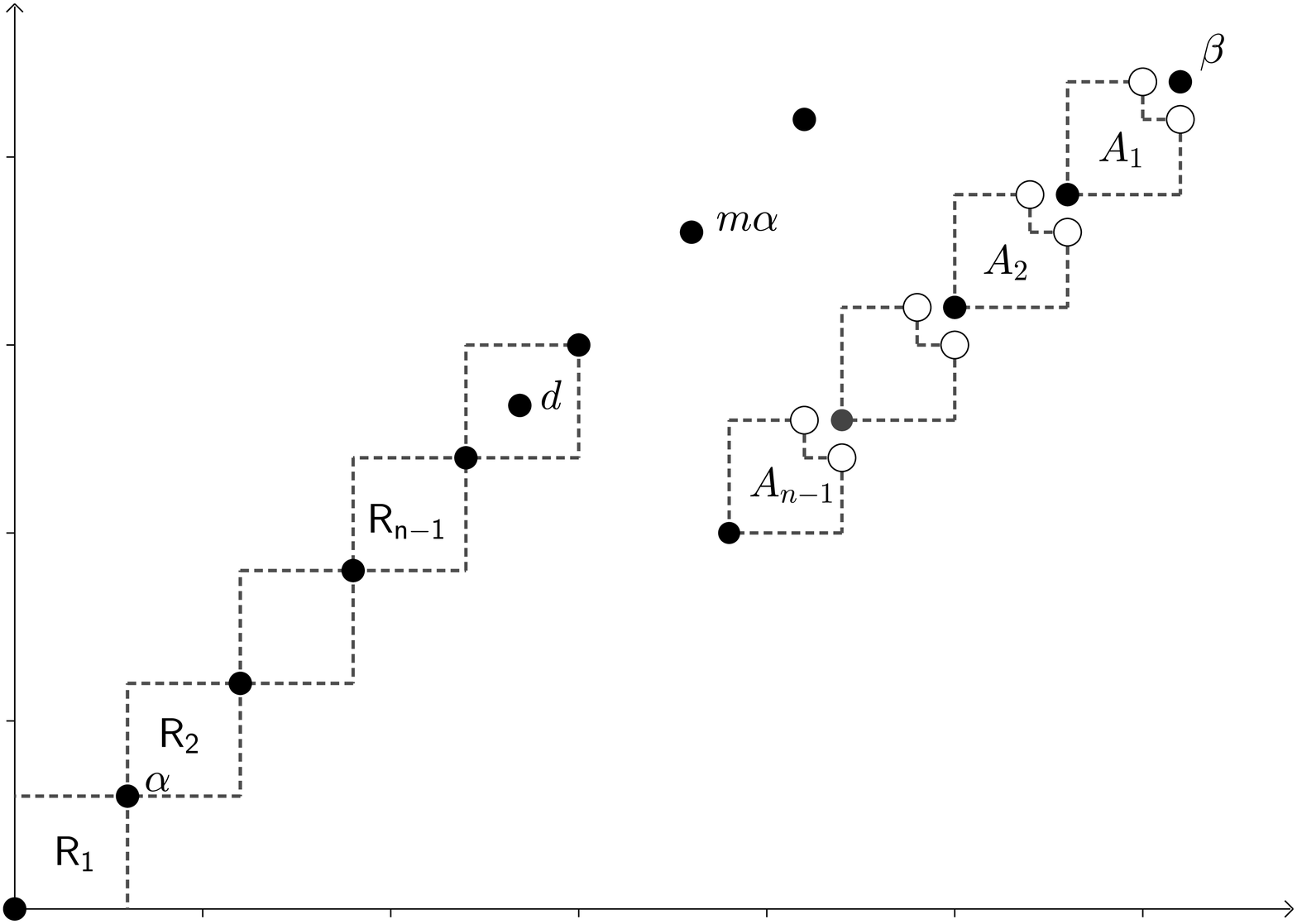}
\end{figure}

So we have 
$$
A_1+j\alpha \subset \kk^{\circ}+\kk^{\circ}\ \ \text{for}\ \ 1\leq j \leq m.
$$
Therefore, every sequence like (\ref{seq1}) such that $c_{i|\alpha|}=\beta+i\alpha$ is contained in $\kk^{\circ}+\kk^{\circ}$, up to the points $c_{i|\alpha|-1}$, which are the form $\beta+i\alpha-e_{j}$ for some $j$.

\medskip
Take $d\in R_n\cap \sss$. We have 
\begin{equation}
\label{equddk}
d+k\alpha \subset \kk^{\circ}
\end{equation}
for $0\leq k\leq m-n+1$ and we also have
\begin{equation}
\label{equmar}
\beta+i\alpha-e_{j} \in A_l+d+k\alpha\ \ \ \mbox{ for } i=n-l+k\ \text{and some}\ j .
\end{equation}
In fact, write an element in $A_l + d + k \alpha $ as $ \beta + (n-l + k-1) \alpha + q + b $ where $ d = (n-1) \alpha + q$; then take $b := (\alpha-u) -e_j$ such that $b\geq 0$ and the claim follows.

\medskip
To find the desired $c_{i|\alpha|-1}$'s, by (\ref{equaaa}), (\ref{equddk}) and (\ref{equmar}), it suffices to show that for all $i\in\{1,\ldots,m\}$ we can find $k\in\{0,\ldots,m-n+1\}$ and $\ell\in\{1,\ldots,n -1\}$ such that $i=n-l+k$. But this is immediate from the very variation of these indices. Thus take $c_{i|\alpha|-1}=\beta+i\alpha-e_j$ for each $1\leq i\leq m$ and the result follows.
\end{proof}

\

\

\section{Characterizing Non-Hyperelliptic Curves}\label{S4}

In this section we connect Noether, Rosenlicht and Green statements. The result we prove stands for a characerization of non-hyperellipticity. It appears in \cite[Thm. 4.1]{CFM}, but part of the equivalences relies on bibranch hypothesis. In order to state it, we recall a few concepts.

\medskip
Let $\fff$ be a torsion-free sheaf of rank 1 on $C$. Following \cite[p. 363 Dfn. 2.2 (7)]{Bal}, the \emph{Clifford index} of $C$ is
$$
\cliff(C)=\min\{\deg\,\fff-2(h^0(\fff)-1)\, ;\, h^0(\fff)\geq 2\ {\rm{e }}\ h^1(\fff)\geq 2\}.
$$
Now following \cite{ApF, Gr}, consider the complex
\begin{equation}\label{eqs}
\wedge^{p+1}H^0(\fff)\otimes H^0(\fff^{q-1})\stackrel{\phi_{p,q}^1}{\longrightarrow}\wedge^ {p}H^0(\fff)\otimes H^0(\fff^{q})\stackrel{\phi_{p,q}^2}{\longrightarrow} \wedge^{p-1}H^ 0(\fff)\otimes H^0(\fff^{q+1});
\end{equation}
the quotient
$$
K_{p,q}(C,\fff):= \ker(\phi_{p,q}^2)/{\rm im}(\phi_{p,q}^1)
$$
is the \emph{$(p,q)$-th Koszul cohomology} of $\fff$. The approach here, as in \cite{CFM}, allows torsion free sheaves on both definitions, and not just bundles as in the mentioned refrences.

\medskip
Now \emph{Green's conjecture} links both concepts in the following statement \cite{Gr}:
$$K_{p,2}(C,\omega)=0 \Longleftrightarrow p < \cliff(C).$$
It was proved for smooth curves in general by C. Voisin in \cite{Vo1, Vo2} while in \cite{BFT,FT}, e.g., one can find such a study for classes of possibly singular curves. It is well known that, for regular curves, the conjecture at level $p=0$ is essentially Max Noether statement, while level $p=1$ corresponds to Enriques-Babbage \cite{En,B} celebrated characterization of trigonal curves.

\medskip
Now let us carefully look at the $p=0$ case, assuming the curve might be singular. Taking $q=2$ and $\fff=\ww$ in (\ref{eqs}) we get
$$
H^0(\ww)\otimes H^0(\ww)\stackrel{\phi_{0,2}^1}{\longrightarrow} H^0(\ww^2)\longrightarrow 0
$$
Thus $K_{0,2}=0$ is equivalent to saying that $\phi_{0,2}^1$ is surjective, that is, Noether holds for $n=2$. But we just saw in the previous section that once Noether's statement holds for $n=2$ one easily derives all other surjections. On the other hand, in \cite[App]{EKS} it is proved that $\cliff(C)=0$ if and only if $C$ is either hyperelliptic or rational with a unique singularity whose maximal ideal agrees with the conductor. Curves with such a property appear in the literature in many different ways. They are called \emph{curves defined by a module} in Serre's \cite{Serre1959},  \emph{partitional} in Behnke-Christophersen 
\cite{BC}, \emph{universal} in Steven \cite{St}, and \emph{nearly normal} in Kleiman-Martins \cite{KM}, the term we adopt below, and where is given a characterization of which in the rational case. So these curves happen to be the key ingredient to adjust Green's conjecture to the integral case, as stated in the last two items of the following result.

\begin{thm}\label{ap1equiv}
The following are equivalent:
\begin{itemize}
\item[(i)] $C$ is non-hyperelliptic;
\item[(ii)] Rosenlicht's statement holds: there is a birational morphism $C'\to C$;
\item[(iii)] Noether's statement holds: ${\rm Sym}^nH^0(\ww)\to H^0(\ww^n)$  is surjective $\forall n\in\nn$;
\item[(iv)] $K_{0.2}(C,\ww)=0$
\item[(v)] $\cliff(C)>0$ or $C$ is rational nearly normal.
\end{itemize}
\end{thm}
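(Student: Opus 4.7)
The plan is to close a short cycle of implications among (i)--(v), using pieces already at hand. The equivalence (i)$\Leftrightarrow$(ii) is Rosenlicht's theorem recalled in Section~\ref{SVsubsection1}. The equivalence (i)$\Leftrightarrow$(v) follows from the dichotomy \cite[App]{EKS} cited above, since $\cliff(C)=0$ occurs precisely when $C$ is hyperelliptic or rational nearly normal; thus ``$C$ non-hyperelliptic'' and ``$\cliff(C)>0$ or $C$ rational nearly normal'' describe the same class. The implication (i)$\Rightarrow$(iii) is Theorem~\ref{Teo2}. Finally, (iii)$\Rightarrow$(iv) is immediate: specializing the Koszul complex (\ref{eqs}) at $p=0$, $q=2$, $\fff=\omega$ makes the outgoing term $\wedge^{-1}H^0(\omega)\otimes H^0(\omega^3)$ vanish, so $K_{0,2}(C,\omega)$ coincides with the cokernel of $\phi_{0,2}^1:H^0(\omega)\otimes H^0(\omega)\to H^0(\omega^2)$, which is killed by (iii) at $n=2$.

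To close the cycle I propose to prove (iv)$\Rightarrow$(i) by contrapositive, essentially as in \cite[Thm.~4.1]{CFM}. If $C$ is hyperelliptic then, by the canonical-model analysis of Section~\ref{SVsubsection1}, the model $C'\cong\mathbb{P}^1$ is embedded in $\mathbb{P}^{g-1}$ as the rational normal curve of degree $g-1$ and $\psi:\cb\to C'$ factors the hyperelliptic $g^1_2$. Consequently the image of $\phi_{0,2}^1$ factors through the classical surjection $\mathrm{Sym}^2 H^0(\mathbb{P}^1,\oo(g-1))\to H^0(\mathbb{P}^1,\oo(2g-2))$, whose target has dimension $2g-1$, whereas $h^0(\omega^2)=3(g-1)$ by Riemann--Roch (since $\deg\omega^2=4g-4>2g-2$ forces $h^1(\omega^2)=0$). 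The positive gap $g-2$ then shows $K_{0,2}(C,\omega)\neq 0$, contradicting (iv).

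The main subtlety is to verify, for possibly non-Gorenstein hyperelliptic $C$, both the rational-normal-curve description of $C'$ and the Riemann--Roch vanishing $h^1(\omega^n)=0$ for $n\geq 2$. The first comes from the setup of Section~\ref{prelim}, since the hyperelliptic $g^1_2$ on $\cb$ still factors $\psi$ by the construction of the canonical model; the second follows from duality for integral curves, as $\deg\omega^n=n(2g-2)>2g-2$ when $n\geq 2$ and hence $\mathrm{Hom}(\omega^n,\omega)=0$.
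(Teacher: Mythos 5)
Your proof is correct and follows essentially the same route as the paper: you arrange the cycle slightly differently (closing (i)$\Rightarrow$(iii)$\Rightarrow$(iv)$\Rightarrow$(i) and deducing (i)$\Leftrightarrow$(v) directly from the Eisenbud--Harris--Koh--Stillman dichotomy, where the paper instead proves (iii)$\Leftrightarrow$(iv), (iv)$\Leftrightarrow$(v) and (iii)$\Rightarrow$(i) separately), but every ingredient --- Rosenlicht's theorem, the main theorem, the identification of $K_{0,2}(C,\omega)$ with the cokernel of $\phi^1_{0,2}$, and the hyperelliptic dimension count $\dim H^0(\omega)^2=2g-1<3g-3=h^0(\omega^2)$ --- coincides with the paper's. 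Note only that your step (v)$\Rightarrow$(i) tacitly uses that rational nearly normal curves are non-hyperelliptic (and, like the paper's count, the inequality $2g-1<3g-3$ needs $g\geq 3$), points the paper's own terse treatment leaves equally implicit.
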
.

The previous discussion yields the equivalences (iii)$\Leftrightarrow$(iv) and (iv)$\Leftrightarrow$(v). Now if $C$ is hyperelliptic then $C'$ is the rational normal curve of degree $g-1$ in $\mathbb{P}^{g-1}$, and in particular is not birationally equivalent to $C$, thus (ii)$\Rightarrow$(i); on the other hand, (i)$\Rightarrow$(ii) is Rosenlicht's Theorem \cite[Thm. 17]{R}, so (i)$\Leftrightarrow$(ii). Also, if $C$ is hyperelliptic, then $\dim(H^0(\ww)^2)=2g-1$ owing to \cite[p.~95 bot]{KO1}, while $h^0(\ww^2)=3g-3$ due to \cite[Lem.~3.4]{Mt} and Riemann-Roch, thus (iii)$\Rightarrow$(i); now (i)$\Rightarrow$(iii) is our main theorem, so (i)$\Leftrightarrow$(iii), and we are done with the proof of the above result. We just recall here \cite[Rmk.~2.10]{Mt} where is discussed a way of getting (ii) by means of (iii). So since we got Noether's statement in this work, we also get an alternative way of proving Rosenlicht's statement.

\section{Another Local Approach}\label{Approach}

In this section we briefly discuss a different way of addressing Noether's result. It connects the subject with some local commutative algebra developed by Barucci, D'Anna and Fr\"oberg in \cite{BF,BDF,BDF2}. At the end we get the theorem for curves with bibranch non-Gorenstein points at worse, as in \cite{CFM}; to go further, one needs \cite[Thm.~4.6, Cor.~4.7]{BDF2} for an arbitrary number of branches. The gain is linking the problem with some alternative local theory in a way that Noether's assertion can be derived from an independent local statement. Also, this approach congregates both intrisic and extrinsic points of view. Indeed, the extrinsic proof of Noether in \cite[Thm.~2.7]{Mt} is used here to lead us to a local, and hence intrinsic, problem of rings named almost Gorenstein, as we will see.

\medskip
We start by adjusting \cite{BDF2} to our framework. The reader can check that all results mentioned here work perfectly to any local ring of a point in a curve.

\begin{defi}
\emph{Let $P\in C$ be any point. Given $I,J$ fractional ideals of $\op$, set 
$$
(I: J):=\{a\in k(C) \mid a J\subseteq I\}
$$
which agrees with the conductor $\cp$ when $I=\op$ and $J=\obp$. Following \cite[Chp.~4]{BDF2} $P$ is called
\textit{maximal with fixed conductor} if for every ring $T$ such that
$\op\varsubsetneq T \subset \obp
$, 
we have a strict inclusion $(\op:\obp) \subsetneq\, (T:\obp)$.}
\end{defi}

And now we recall a couple of local properties with a strong geometric appeal as we will see right away.

\begin{defi} \label{defnng}
\emph{Let $P\in C$ be any point. Set
$$
\eta_P:=\dim(\vlp/\op)\ \ \ \ \ \ \ \ \ \ \ \mu_P:=\dim({\widehat{\oo}_{P}}/\vlp)
$$
and also
$$
\eta:=\sum_{P\in C}\eta_P\ \ \ \ \ \ \ \ \ \ \mu:=\sum_{P\in C}\mu_P
$$
Following \cite[pp. 418, 433, Prps. 21, 28]{BF} call $P$ \emph{Kunz} if $\eta_P=1$ and,
accordingly, say $C$ is \emph{Kunz} if all of its non-Gorenstein points are Kunz. Following \cite[Dfn. 5.7]{KM} call $C$ \emph{nearly Gorenstein} if  $\mu=1$. 
}
\end{defi}

The key fact that links both definitions is \cite[Thm. 4.6, Cor. 4.7]{BDF2} from which we get that if $P$ is bibranch at worst, then it is maximal with fixed conductor if and only if it is Gorenstein or Kunz. Moreover, if $P$ is Kunz then it is almost Gorenstein owing to  \cite[Prp.~21]{BF}. On the other hand, \cite[Thm. 2.7]{Mt} provides a total geometric and extrinsic proof of Noether's statement to nearly Gorenstein curves. The reason why is that such curves are precisely the ones for which the canonical model $C'$ is projectively normal. 

\medskip
So, for the remainder, we will merge the facts above to get a third proof of Noether if the singularities are bibranch at worse. But the reader should note that we are actually proving something else, i.e., assume maximal with fixed conductor implies almost Gorenstein, then Noether holds.   

\medskip
Let $P\in C$ be a non-Gorenstein point, and $C^{\#}$ be the curve obtained by resolving all non-Gorenstein singularities of $C$ distinct from $P$. We have a birational morphism $\pi^{\#}:C^{\#}\to C$. Let $\omega^{\#}$ be the dualizing sheaf of $C^{\#}$. Set $\mathcal{O}^{\#}=\pi^{\#}_{\ast}(\mathcal{O}_{C^{\#}})$ and $\vv^{\#} :=\pi^{\#}_{\ast}( \ww^{\#})/\lambda$, 
where $\lambda$, as before, is such that $(\overline{\oo}\ww)_P=\overline{\oo}_P\lambda$. Thus 
\begin{equation}
\label{equhch}
H^0(\vv^\#)\subset H^0(\vv).
\end{equation}
Also, by \cite[Lems. 2.8 and 6.1]{KM}, we have that $\vv^{\#}_{P}$ is determined by the conductor of the local ring. But, in this case,  $\oo^{\#}_P=\oo_{P}$ by construction, and hence
\begin{equation}
\label{equwis}
\vv^{\#}_P=\vv_P.
\end{equation} 
So we have the following morphisms
\begin{equation}
\label{equsic}
H^0(\vv)^n \supset H^0(\vv^{\#})^{ n}\longrightarrow (\vv_P^{\#})^n\slash t^{-\epsilon}\mathcal{C}^n_P = \vv_P^n\slash t^{-\epsilon} \mathcal{C}^n_P
\end{equation}
where $\epsilon$ is as in the statement of Lemma \ref{Lem2}.

\medskip
So now we will prove this lemma by a different argument if $P$ is bibranch at worst. First off, from the proof of \cite[Thm. 3.7]{Mt} one sees that Lemma \ref{Lem2}, besides sufficient, is also necessary to get $H^0(\vv)^n=H^ 0(\vv^n)$. So assume $P$ is almost Gorenstein. Then $C^{\#}$ is nearly Gorenstein and satisfies Noether statement owing to \cite[Tmh.~2.9]{Mt}. Therefore, the maps in (\ref{equsic}) are surjective, which implies that the maps $H^0(\vv)^n \to \vv_P^n\slash t^{-\epsilon} \mathcal{C}^n_P$ are surjective as well.

\medskip
Now take any $P\in C$. If $P$ is maximal with fixed conductor, then it is almost Gorenstein and we are done, from what was said above.  Otherwise there is and intermmediate ring $\op^*$ such that
$$
\mathcal{O}_P\varsubsetneq \op^* \subset \overline{\mathcal{O}}_P
$$
and also
$$
\cp^*:=(\op^*: \overline{\mathcal{O}}_P)= (\mathcal{O}_P: \overline{\mathcal{O}}_P)=\cp.
$$
Then exclusively either:
\begin{itemize}
    \item[(i)] $\op^*$ is local and maximal with fixed conductor;
    \item[(ii)] $\op^*$ is not local;
    \item[(iii)] $\op^*$ is local, but not maximal with fixed conductor.
\end{itemize}

\medskip
If (i) holds, consider the partial dessingularization $\phi:C^*\to C$ such that: 
$\phi^{-1}(P)=\{P^*\}$ with $\phi_*(\oo_{C^*,P^*})=\op^*$ and such that $C^*\setminus\{P^*\}\cong C\setminus\{P\}$. Then $P^*$ is almost Gorenstein since $\op^*$ is maximal with fixed conductor; thus $C*$ is nearly Gorenstein, and we get the lemma for $C^*$ and $P^*$. But as $\cp^*=\cp$ we can apply exactly what was said above replacing $C^{\#}$ by $C^*$ to get the lemma for $C$ and $P$ as well. 

\medskip
If (ii) holds, then $\upsilon_P(\op ^*)$ has a nonzero element with a vanishing component. On the other hand, by \cite[Lem 4.1.]{BDF2} we have $\upsilon_P(\op^*)\subset \kk$; and, in particular, $\kk$ has such an element and this case is trivial as noted in the proof of \cite[Lem.~3.1]{CFM}. And if (iii) holds, consider $C^*$ and $P^*$ as defined in (i). Replace $C$ by $C^*$, $P$ by $P^*$ and restart the procedure, which naturally ends as $\dim(\overline{\mathcal{O}}_P/\mathcal{O}_P )<\infty$.


\end{document}